\documentclass{amsart}

\usepackage{amsfonts}
\usepackage{amssymb}
\usepackage{eucal}
\usepackage{graphicx}
\usepackage[all,cmtip]{xy}
\theoremstyle{plain}
\newtheorem{theorem}{\sc Theorem}[section]
\newtheorem{lemma}[theorem]{\sc Lemma}
\newtheorem{remark}[theorem]{\sc Remark}
\newtheorem{corollary}[theorem]{\sc Corollary}
\newtheorem{proposition}[theorem]{\sc Proposition}

\theoremstyle{definition}

\newtheorem*{example}{\sc Example}

\theoremstyle{remark}

\begin{document}
\title{On Topological Homotopy Groups of $n$-Hawaiian like spaces}
\author{H. Ghane}
\author{Z. Hamed}
\author{B. Mashayekhy}
\author{H. Mirebrahimi}
\address{Department of Pure Mathematics, Center of Excellence in Analysis on Algebraic Structures,
 Ferdowsi University of Mashhad, P.O.Box 1159-91775, Mashhad, Iran.}
\keywords{homotopy group, topological group, $n$-Hawaiian like space}
\subjclass[2000]{55Q05; 55U40; 54H11; 55P35.}
\maketitle
\begin{abstract}
By an $n$-Hawaiian like space $X$ we mean the natural inverse limit,
$\displaystyle{\varprojlim (Y_i^{(n)},y_i^*)}$, where
$(Y_i^{(n)},y_i^*)=\bigvee_{j\leq i}(X_j^{(n)},x_j^*)$ is the wedge
of $X_j^{(n)}$'s in which $X_j^{(n)}$'s are $(n-1)$-connected,
 locally $(n-1)$-connected,
$n$-semilocally simply connected and compact CW spaces. In this paper, first we show that the
natural homomorphism $\displaystyle{\beta_n:\pi_n(X,*)\rightarrow
\varprojlim \pi_n(Y_i^{(n)},y_i^*)}$ is bijection. Second, using this fact we prove that
the topological $n$-homotopy group of an $n$-Hawaiian like space, $\pi_n^{top}(X,x^*)$, is a
topological group for all $n\geq 2$ which is a partial answer to the open question whether
$\pi_n^{top}(X,x^*)$ is a topological group for any space $X$ and $n\geq 1$. Moreover,
we show that $\pi_n^{top}(X,x^*)$ is metrizable.
\end{abstract}
\section{Introduction}
In 2002, a work of Biss [1] initiated the development of a theory in
which the familiar fundamental group $\pi_1(X,x^*)$ of a topological
space $X$ becomes a topological space denoted by
$\pi_1^{top}(X,x^*)$ by endowing quotient topology inherited from
the path components of based loops in $X$ with compact-open
topology. An important feature of the theory is that if $X$ and $Y$
have the same homotopy type, then $\pi_1^{top}(X,x^*)$ and
$\pi_1^{top}(Y,y^*)$ are homeomorphic. Among the other things, he
claimed that $\pi_1^{top}(X,x^*)$ is a topological group and
$\pi_1^{top}$ is a functor from the category of based spaces to the
category of topological groups. However, there is a gap  in the
proof of $[1,Prop. 3.1]$, for more details see $[5]$ and $[2]$.

The authors [7] extended the above theory to higher homotopy groups
by introducing a topology on $n$-th homotopy group of a pointed
space $(X,x^*)$ as a quotient of the $n$-loop space $\Omega^n(X,x^*)$
equipped with the compact-open topology.  Call this space the topological homotopy
group and denote it by $\pi_n^{top}(X,x^*)$. As mentioned in [2], the
misstep in the proof is repeated by the authors to prove that
$\pi_n(X,x^*)$ is a topological group [7, Th. 2.1]. Hence, there is
a question  whether or not $\pi_n^{top}(X,x^*)$, $n\geq 1$, is a
topological group.

Note that if $X$ is locally contractible, then $\pi_1^{top}(X,x^*)$
inherits the discrete topology [5] and thus there is no information
other than algebraic data. The same thing happens in the case of higher homotopy
groups, when $X$ is a locally $n$-connected metric space, see [7,
Th. 3.6]. So spaces that are not locally $n$-connected, $n\geq 1$,
are interesting. One of the simplest nonlocally $n$-connected space
is the $n$-dimensional Hawaiian earring $\mathcal{H}_n$, $n\geq 1$.

Morgan and Morrison [10], among presenting a Van-Kampen theorem for
$1$-Hawaiian like spaces proved that the natural homomorphism
$$\displaystyle{\beta:\pi_1(\mathcal{H}_1,*)\rightarrow \varprojlim
\pi_1(Y_i^{(1)},y_i^*)}$$ is injective, where
$Y_i^{(1)}=\bigvee_{j\leq i}S_j^{1}$ is the wedge of $1$-spheres
$S_j^{1}$ of radius $\frac{1}{j}$ and center $(\frac{1}{j},0)$ and
hence $\displaystyle{\mathcal{H}_1=\varprojlim Y_i^{(1)}}$ with
respect to the natural inverse system for the $Y_i^{(1)}$.

Now consider the $n$-dimensional Hawaiian earring $\mathcal{H}_n$,
$n\geq 2$, which is the union of a sequence of $n$-spheres $S_j^{n}$
of radius $\frac{1}{j}$ identified at a common point $*$ as a
subspace of $\mathbb{R}^{n+1}$. This article aims to explore
$\mathcal{H}_n$ in the content of inverse limit space, i.e.
$\displaystyle{\mathcal{H}_n=\varprojlim Y_i^{(n)}}$, where
$\mathcal{H}_n$ can be approximated by factors
$Y_i^{(n)}=\bigvee_{j\leq i}S_j^{n}$.

As in [10], we consider a natural homomorphism
$$\displaystyle{\beta_n:\pi_n(\mathcal{H}_n,*)\rightarrow
\varprojlim \pi_n(Y_i^{(n)},y_i^*)}$$ as follows: let
$R_j:\mathcal{H}_n\rightarrow Y_j^{(n)}$ denote the projection
fixing $Y_j^{(n)}$ pointwise and collapsing
$\bigcup_{i=j+1}^{\infty}Y_i^{(n)}$ to the based point $*$. The
formula $\beta_n([f])=([R_1(f)],[R_2(f)],...)$ determines
the induced homomorphism $\beta_n$ into the inverse limit. In this paper, we intend to show
that $\beta_n$ is a bijection, for all $n\geq 2$. Then we present
two natural ways of imparting a topology on
$\pi_n(\mathcal{H}_n,*)$, for $n\geq 2$, as follows:\\

\begin{enumerate}
    \item {\it Since $\beta_n$ is an isomorphism,  one can pull back via
    $\beta_n$ to create the prodiscrete metric space
    $\pi_n^{lim}(\mathcal{H}_n,*)$. Indeed, as mentioned before,
    $Y_i^{(n)}$ is locally $n$-connected and thus by [7, Th. 3.6],
    $\pi_n^{top}(Y_i^{(n)},y_i^*)$ is discrete, which implies that
    $\displaystyle{ \varprojlim \pi_n^{top}(Y_i^{(n)},y_i^*)}$ is a
    prodiscrete metric space.}
    \item {\it We can endow the quotient topology on $\pi_n(\mathcal{H}_n,*)$
    inherited by the compact-open topology of $n$-loop space
    $\Omega^n(X,x^*)$, denoted by $\pi_n^{top}(\mathcal{H}_n,*)$, see
    [7].}
\end{enumerate}

In this paper, we will show that these two topologies
are agree. Therefore, $\pi_n^{top}(\mathcal{H}_n,*)$ is metrizable,
for $n\geq 2$. However, a result of Fabel [4] shows that the
topological fundamental group $\pi_1^{top}(\mathcal{H}_n,*)$ fails
to be metrizable. Moreover, we assert that
$\pi_n^{top}(\mathcal{H}_n,*)$ is a topological group, for $n\geq
2$. This statement answers the question whether $\pi_n^{top}(X,x^*)$
is a topological group, in special case. In fact, the main results
of the
paper are formulated as follows.
\begin{theorem} Suppose that for each $j$, $X_j^{(n)}$is an $(n-1)$-connected,
 locally $(n-1)$-connected,
$n$-semilocally simply connected, compact CW space and $X$ is
approximated by the factors $Y_i^{(n)}$, i.e.
$\displaystyle{(X,*)=\varprojlim (Y_i^{(n)},y_i^*)}$, where
$(Y_i^{(n)},y_i^*)=\bigvee_{j\leq i}(X_j^{(n)},x_j^*)$ is the wedge
of $X_j^{(n)}$'s. Then the homomorphism
$$\displaystyle{\beta_n:\pi_n(X,*)\rightarrow \varprojlim
\pi_n(Y_i^{(n)},y_i^*)}$$ is a bijection.
\end{theorem}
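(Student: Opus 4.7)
\emph{Overall plan.} Both halves rest on the same observation. Since $n\geq 2$ and each $X_j^{(n)}$ is $(n-1)$-connected CW, the classical wedge formula gives
\[
\pi_n\!\left(Y_i^{(n)},y_i^*\right)\;\cong\;\bigoplus_{j\leq i}\pi_n\!\left(X_j^{(n)},x_j^*\right),
\]
and under this identification the bonding map $p_i^{i+1}:Y_{i+1}^{(n)}\to Y_i^{(n)}$ (collapsing $X_{i+1}^{(n)}$) is projection onto the first $i$ coordinates. Hence an element of $\varprojlim_i\pi_n(Y_i^{(n)})$ is equivalently a sequence $(\alpha_j)_{j\geq 1}\in\prod_{j}\pi_n(X_j^{(n)})$.

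\emph{Surjectivity.} Given such a sequence, pick based representatives $f_j:(S^n,*)\to(X_j^{(n)},x_j^*)$. Partition $S^n$ into a null sequence of based $n$-disks $D_j$ accumulating only at $*$, and define $f:S^n\to X$ by $f|_{D_j}=f_j\circ c_j$, with $c_j:D_j\to S^n$ the standard collapse of $\partial D_j$, and $f(*)=*$. Continuity on $S^n\setminus\{*\}$ is automatic; continuity at $*$ uses the shape of neighborhoods in the inverse limit, since any basic open set $R_i^{-1}(U)\ni *$ contains $X_j^{(n)}$ whenever $j>i$, so all but finitely many $D_j$'s land inside $R_i^{-1}(U)$ automatically. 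One then verifies that $R_i\circ f$ agrees up to an evident homotopy with the finite wedge concatenation $f_1\vee\cdots\vee f_i$, and therefore represents $\alpha_i$.

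\emph{Injectivity.} Given $[f],[h]$ with $\beta_n[f]=\beta_n[h]$, set $f_j:=r_j\circ R_j\circ f$ and $h_j:=r_j\circ R_j\circ h$ in $X_j^{(n)}$, where $r_j:Y_j^{(n)}\to X_j^{(n)}$ is the wedge retraction. The hypothesis together with the wedge isomorphism above force $[f_j]=[h_j]$ in $\pi_n(X_j^{(n)})$, so we may fix homotopies $G_j:f_j\simeq h_j$ in $X_j^{(n)}$. The plan is to produce the required homotopy $f\simeq h$ in two steps: first, show that $f$ and $h$ are homotopic in $X$ to the maps $F$ and $H$ obtained by applying the disk-assembly construction of the surjectivity step to the sequences $(f_j)$ and $(h_j)$; second, glue the $G_j$'s via the same construction applied to $S^n\times I$ (partitioned as $D_j\times I$ accumulating at $\{*\}\times I$) to obtain a homotopy $F\simeq H$ in $X$. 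Transitivity then gives $f\simeq h$.

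The principal obstacle is the first step: showing $f\simeq F$ in $X$. Here the remaining hypotheses enter crucially. Compactness of each $X_j^{(n)}$ confines the auxiliary correction homotopies to individual wedge summands, which keeps them small in the inverse-limit topology; local $(n-1)$-connectedness and $n$-semilocal simple connectedness then let us push those corrections into arbitrarily small neighborhoods of the base point, so that the countably many partial corrections glue continuously at $*$ to yield a genuine homotopy $f\simeq F$ in $X$.
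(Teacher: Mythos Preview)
Your overall strategy coincides with the paper's: both invoke the wedge isomorphism $\pi_n(Y_i^{(n)})\cong\bigoplus_{j\le i}\pi_n(X_j^{(n)})$, both build the surjectivity preimage by concatenating representatives on a null sequence of shrinking subcubes (the paper calls such a map a \emph{standard $n$-loop}, with $f(J_j^n)\subseteq X_j^{(n)}$ for $J_j^n=[1-2^{1-j},1-2^{-j}]\times I^{n-1}$), and both reduce injectivity to the claim that every class admits a standard representative.

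The divergence is exactly at your ``principal obstacle'' $f\simeq F$, and here your sketch is not yet a proof. The paper isolates this step as its Lemma~2.6 and argues it by a \emph{limit of compatible homotopies}: for each $i$ one produces a homotopy $H_i\colon f_i\simeq h_1*\cdots*h_i$ in the finite wedge $Y_i^{(n)}$, arranging inductively that $r_k^i\circ H_i=H_k$ for all $k\le i$; the resulting inverse system $(H_i)$ then converges uniformly to a continuous homotopy $H$ in $X=\varprojlim Y_i^{(n)}$. Compactness is what makes the uniform-limit argument go through. Your final paragraph instead appeals to local $(n-1)$-connectedness and $n$-semilocal simple connectedness to ``push corrections into arbitrarily small neighbourhoods of the base point,'' but this is neither the paper's mechanism nor, as stated, a workable one: those hypotheses are used in the paper only to deduce (via Lemma~2.1 and Corollary~2.2) that each $\pi_n(X_j^{(n)})$ is finitely generated, not to control homotopies near $*$. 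The substantive content you are missing is the inductive construction of the compatible family $(H_i)$ and the passage to the limit; once that is in place, your Steps~1 and~3 follow, and your Step~2 is exactly the paper's argument.
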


We call a space $X$ that satisfies the assumptions of Theorem 1.1 an
\emph{$n$-Hawaiian like} space.\\

\begin{theorem} If $X$ is an $n$-Hawaiian like space, then
$\pi_n^{top}(X,x^*)$ is a topological group, for $n\geq 2$.
Moreover, it is a prodiscrete metric space.
\end{theorem}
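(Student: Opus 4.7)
My plan is to deduce Theorem 1.2 from Theorem 1.1 by transferring the topological group structure along the bijection $\beta_n$. Two topologies live on $\pi_n(X,x^*)$: the quotient topology $\tau_{top}$ coming from the compact-open topology on $\Omega^n(X,x^*)$, and the pullback topology $\tau_{lim}$ obtained by pulling back, via $\beta_n$, the prodiscrete topology on $\varprojlim \pi_n^{top}(Y_i^{(n)},y_i^*)$. The heart of the argument is to show that $\tau_{top}=\tau_{lim}$; once this equality is in hand, the conclusion is immediate.

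For the easy half, each $Y_i^{(n)}$ is a finite wedge of $(n-1)$-connected, locally $(n-1)$-connected, $n$-semilocally simply connected compact CW spaces, hence itself a compact, metrizable CW space that is locally $n$-connected. By [7, Th.\ 3.6], $\pi_n^{top}(Y_i^{(n)},y_i^*)$ is discrete; in particular each factor is a topological group, so $\varprojlim \pi_n^{top}(Y_i^{(n)},y_i^*)$ is a topological group carrying a prodiscrete metric. The continuous projections $R_j\colon X\to Y_j^{(n)}$ induce continuous maps $\Omega^n(X,x^*)\to \Omega^n(Y_j^{(n)},y_j^*)$ in the compact-open topology that descend to continuous homomorphisms between the quotients; the universal property of the inverse limit then gives continuity of $\beta_n$, so $\tau_{top}\supseteq \tau_{lim}$.

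The hard part is the reverse inclusion, i.e., showing that $\beta_n$ is an open map. Given $[f]\in\pi_n(X,x^*)$ and an open neighborhood $U$ of $[f]$ in $\tau_{top}$, with $q^{-1}(U)$ a compact-open neighborhood of $f$ in $\Omega^n(X,x^*)$, I would need to produce an index $k$ such that every $[g]$ with $(R_j)_*[g]=(R_j)_*[f]$ for all $j\le k$ already lies in $U$. The idea is to exploit $X=\varprojlim Y_i^{(n)}$ together with compactness of $S^n$: a representative $g\colon S^n\to X$ whose projections to $Y_j^{(n)}$ are homotopic to those of $f$ for all $j\le k$ can be replaced, by composing the homotopies, by a representative that agrees with $f$ on $R_k(X)$ up to a perturbation taking values in the tail $\bigcup_{i>k} X_i^{(n)}$; using the local $(n-1)$-connectedness and $n$-semilocal simple connectedness of the factors, this tail perturbation can be squeezed into the basic compact-open neighborhood once $k$ is taken sufficiently large. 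This is the main technical obstacle, and it refines the homotopy-modification technique underlying Theorem 1.1.

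Once both inclusions are established, $\beta_n$ is a homeomorphism of groups between $\pi_n^{top}(X,x^*)$ and $\varprojlim \pi_n^{top}(Y_i^{(n)},y_i^*)$. Because the target is a topological group endowed with a prodiscrete metric, the same is true of $\pi_n^{top}(X,x^*)$, completing the proof of Theorem 1.2.
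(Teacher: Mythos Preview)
Your overall plan coincides with the paper's: compare the quotient topology $\tau_{top}$ with the prodiscrete pullback $\tau_{lim}$ and show they agree. Your ``easy half'' (continuity of $\beta_n$, hence $\tau_{lim}\subseteq\tau_{top}$) is exactly what the paper does via the induced maps $(R_j)_*$ and the universal property of the inverse limit.

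The difference lies in the ``hard half''. Your sketch for openness of $\beta_n$ is not a proof: you describe replacing $g$ by something that agrees with $f$ on $R_k(X)$ ``up to a tail perturbation'' and then ``squeezing'' this into a compact--open neighborhood, but no mechanism is given for why a sufficiently large $k$ forces the class into the prescribed $\tau_{top}$-open set, nor do you reduce to the identity element. The paper handles this direction quite differently and more cleanly. First, it proves explicitly (Lemma~3.1, Proposition~3.2) that left and right translations are homeomorphisms of $\pi_n^{top}(X,x^*)$, so it suffices to work at $[e_{x^*}]$. Second, it makes the elementary observation that if $W$ is a $\tau_{top}$-neighborhood of $[e_{x^*}]$, then $q^{-1}(W)$ contains a basic compact--open set $\bigcap_i\langle K_i,V_i\rangle$ around the constant loop, and hence contains the set $\tilde V$ of \emph{all} loops with image in $V=\bigcap_i V_i$; thus $\hat V\subseteq W$. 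Third, the sets $\hat U$ (for $U$ a neighborhood of $x^*$) form a filter base satisfying the Husain axioms, so they generate a genuine topological-group topology $\tau^{lim}$ on $\pi_n(X,x^*)$; the previous observation then yields $\tau_{top}\subseteq\tau^{lim}$. Finally (Theorem~3.3), the paper checks directly that $\beta_n(\hat{\mathcal U}_k)=\mathcal G_k$ using the standard-form representatives of Lemma~2.6, so $\tau^{lim}$ coincides with your pullback topology.

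In short, your outline is correct but the crucial inclusion $\tau_{top}\subseteq\tau_{lim}$ is left as an unproved heuristic. The paper avoids your direct attack by inserting the intermediate filter-base topology and reducing everything to neighborhoods of the identity; the two ingredients you are missing are the homogeneity lemma and the observation that compact--open neighborhoods of the constant loop always contain some $\tilde V$.
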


\section{Proof of Theorem 1.1}
First, we extend the Van-Kampen theorem [10] to higher homotopy
groups of $n$-Hawaiian like spaces. As a suitable model, one can
consider the $n$-dimensional Hawaiian earring $\mathcal{H}_n$.
 Eda and Kawamura [3] determined the $n$th homotopy
group of $\mathcal{H}_n$ by showing that $\pi_n(\mathcal{H}_n,*)$
is isomorphic to $\mathbb{Z}^\omega$. But, this section aims
 to determine $\pi_n(\mathcal{H}_n,*)$ by a form
of Van-Kampen theorem applicable to Hawaiian like spaces which is
needed to prove the further results.

We begin by fixing some notation. Let $X_j^{(n)}$, $j\in
\mathbb{N}$, be a based compact CW-complex which is also
$(n-1)$-connected,  locally $(n-1)$-connected, and $n$-semilocally
simply connected. Take the wedge $Y_i^{(n)}=\bigvee_{j\leq
i}X_j^{(n)}$ with collapsing maps $r_k^i:Y_i^{(n)}\rightarrow
Y_k^{(n)}$, where $k\leq i$, which are the identity on $Y_k^{(n)}$
and collapse $X_j^{(n)}$ to the base point $x_k^*$ if $k<j\leq i$.
Let $G_i=\bigoplus_{j=1}^i\pi_n(X_j^{(n)},x_j^*)$ with projections
$\pi_k^i:G_i\rightarrow G_k$ defined by
$\pi_k^i(\eta_1,\cdots,\eta_i)=(\eta_1,\cdots,\eta_k)$, $k\leq i$.
Then $\{Y_i^{(n)},r_k^i\}$ and $\{G_i,\pi_k^i\}$ are inverse systems
of topological spaces and groups whose limits we denote by
$\mathcal{H}_n$, and $\mathcal{G}$,
respectively.\\

\begin{example}
Consider $\mathcal{H}_n$ to be a finite family $\{\mathcal{H}_n^k\}_{k=1}^m$ of $n$-Hawaiian earring spaces,
which are joint to $m$ points of an $n$-sphere $S^n$, at their based points. One can
see that $\mathcal{H}_n$ is an $n$-Hawaiian like space. Indeed if $\mathcal{H}_n^k$
is approximated by $Y_{i,k}^{(n)}$'s, then $\displaystyle{\mathcal{H}_n=\varprojlim Y_i^{(n)}}$, where
$Y_i^{(n)}=(\cup_{k=1}^m Y_{i,k}^{(n)})\cup S^n$.\\
\end{example}

We start with a lemma from [6].
\begin{lemma}
Suppose that $X$ is an $(n-1)$-connected,  locally $(n-1)$-connected,
compact metric space and $\pi_n(X)$ is not finitely generated. Then
there exists $x\in X$ such that for each positive integer m, there
exists an $n$-loop $f_m$ at $x$ with diameter less than $2^{-m}$
which is not nullhomotopic. In particular, $X$ is not
$n$-semilocally simply connected at $x$.
\end{lemma}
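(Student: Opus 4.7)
The plan is to argue by contradiction. Suppose no such point $x$ exists; then for every $x \in X$ there is some $m_x \in \mathbb{N}$ such that every $n$-loop at $x$ of diameter less than $2^{-m_x}$ is nullhomotopic in $X$. This is exactly the statement that $X$ is $n$-semilocally simply connected at every point. Combining this with the hypothesis of local $(n-1)$-connectedness, I would first upgrade to the statement that every point has a neighborhood basis of open sets $U$ which are themselves $(n-1)$-connected and for which the inclusion induces the trivial homomorphism $\pi_n(U,x)\to \pi_n(X,x)$.

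Next I would exploit compactness. Choose a finite cover $\{U_1,\ldots,U_k\}$ of $X$ by such "good" neighborhoods and let $\delta>0$ be a Lebesgue number. Pick a basepoint $x_i$ in each $U_i$ and, for each non-empty intersection $U_i\cap U_j$, fix a path in $X$ joining $x_i$ to $x_j$ through that intersection; this is possible because $X$ is (path-)connected and the $U_i$'s are $(n-1)$-connected. These choices produce a finite amount of combinatorial data attached to the nerve of the cover.

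Now given any based map $f:(S^n,*)\to (X,x^*)$, triangulate $S^n$ so finely that every simplex has image of diameter less than $\delta$, hence lies in some $U_i$. Using local $(n-1)$-connectedness one can deform $f$, rel the $0$-skeleton, so that the image of the $(n-1)$-skeleton of the triangulation lies in a finite tree of chosen basepoints and reference paths. Each $n$-simplex then maps into a single $U_i$ as an element of $\pi_n(U_i,x_i)$ (conjugated into the basepoint by the chosen paths), and its image in $\pi_n(X,x^*)$ is trivial by the $n$-semilocal condition. Assembling these pieces, $[f]$ is expressed in terms of finitely many "elementary" classes coming from the combinatorics of the nerve; in particular $\pi_n(X,x^*)$ is finitely generated, contradicting the hypothesis.

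The main obstacle is the cellular deformation argument in the previous paragraph: higher homotopy groups do not admit a direct Van Kampen theorem, so one must carefully use local $(n-1)$-connectedness to push the $(n-1)$-skeleton of a fine triangulation into a finite $1$-complex of base paths, and then use $n$-semilocal simple connectedness to trivialize each $n$-cell in $\pi_n(X)$. Everything else (the contradiction argument, compactness, and the Lebesgue number reduction) is routine once this cellular approximation is in place.
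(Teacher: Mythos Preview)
The paper does not give its own proof of this lemma: it is quoted verbatim from reference~[6] (Ghane--Hamed, \emph{Bull.\ Belg.\ Math.\ Soc.}\ 16 (2009), 179--183) and used as a black box.  So there is no in-paper argument to compare your proposal against; your outline is an attempt to reconstruct the proof in~[6].

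Your overall strategy---assume the conclusion fails, deduce $n$-semilocal simple connectedness everywhere, pass to a finite cover by ``good'' sets via compactness and a Lebesgue number, then decompose an arbitrary $f:S^n\to X$ along a fine triangulation---is the natural one and almost certainly the one used in~[6].  The one place where your sketch is genuinely incomplete is the assembly step.  You assert that after pushing the $(n-1)$-skeleton into a finite tree $T$ of basepoints and paths, each $n$-simplex yields an element of $\pi_n(U_i,x_i)$ which dies in $\pi_n(X)$, and the remaining ``elementary classes from the nerve'' are finite in number.  Two issues deserve more care: first, the deformation of the $(n{-}1)$-skeleton into $T$ need not keep each $n$-simplex inside a single $U_i$, so the semilocal hypothesis does not apply directly to the deformed map; one must instead cap off $f|_{\partial\sigma}$ inside $V_i$ using its $(n{-}1)$-connectedness (before any deformation) and then control the discrepancies between caps on shared faces.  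Second, those discrepancies are the ``elementary classes'', and their finiteness is exactly what has to be proved: a priori they depend on $f$ through $f|_{\partial\sigma}$, not just on the combinatorics of the cover.  A clean way to close this gap is to use the global $(n{-}1)$-connectedness of $X$ (which you have not yet exploited) together with Hurewicz, reducing the problem to showing $H_n(X)$ is finitely generated; alternatively, one can argue inductively over skeleta so that at each stage the choices of caps are drawn from finitely many homotopy classes.  Either way, this is the step that needs an explicit argument rather than a gesture toward ``the combinatorics of the nerve''.
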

The following assertion follows immediately.
\begin{corollary}
Let $X_j^{(n)}$, $j\in \mathbb{N}$, be as above. Then
$\pi_n(X_j^{(n)},x_j^*)$ is finitely generated.
\end{corollary}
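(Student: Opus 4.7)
The plan is to deduce the corollary from Lemma 2.1 by contraposition. The hypotheses on $X_j^{(n)}$ already match all of the non-conclusion hypotheses of that lemma except for the metric requirement, so the main thing to verify is that a compact CW complex is metric, and then run the contrapositive argument.

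First I would observe that a compact CW complex is necessarily finite (a CW complex is compact if and only if it has finitely many cells). A finite CW complex is a finite-dimensional, locally finite polyhedron and hence is metrizable; in particular $X_j^{(n)}$ is a compact metric space. Together with the standing assumptions that $X_j^{(n)}$ is $(n-1)$-connected and locally $(n-1)$-connected, this places $X_j^{(n)}$ inside the scope of Lemma 2.1.

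Now suppose for contradiction that $\pi_n(X_j^{(n)}, x_j^*)$ is \emph{not} finitely generated. Applying Lemma 2.1 to $X = X_j^{(n)}$ produces a point $x \in X_j^{(n)}$ together with a sequence of $n$-loops $f_m$ based at $x$ with $\mathrm{diam}(f_m) < 2^{-m}$, none of which is nullhomotopic. By the conclusion of Lemma 2.1 this forces $X_j^{(n)}$ to fail to be $n$-semilocally simply connected at $x$, contradicting the hypothesis that $X_j^{(n)}$ is $n$-semilocally simply connected. Hence $\pi_n(X_j^{(n)}, x_j^*)$ must be finitely generated.

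There is essentially no obstacle here; the whole content of the corollary is packaged into Lemma 2.1, and the only substantive check is that ``compact CW'' implies ``compact metric'', which is standard. The only small subtlety to mention is that ``$n$-semilocally simply connected'' is assumed globally on $X_j^{(n)}$, so the failure at the single point $x$ produced by Lemma 2.1 is already enough to reach a contradiction.
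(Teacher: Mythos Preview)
Your argument is correct and is exactly the intended one: the paper does not spell out a proof but simply notes that the corollary follows immediately from Lemma~2.1, which is precisely the contrapositive application you give. Your extra remark that a compact CW complex is (finite and hence) metrizable is the one hypothesis check needed to make Lemma~2.1 apply, so nothing is missing.
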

For each $j\in \mathbb{N}$, we denote the generators of
$\pi_n(X_j^{(n)},x_j^*)$ by $\alpha_{j,1}, \cdots, \alpha_{j,k_j}$.

Now we recall a result of [12, Prop. 6.36.].

\begin{proposition}
If $X$ is an $n$-connected CW-complex and $Y$ is an $m$-connected
CW-complex, then the maps $i_X:(X,x^*)\rightarrow (X\vee Y,*)$
and $i_Y:(Y,y^*)\rightarrow (X\vee Y,*)$ given by
$i_X(x)=(x,y^*)$ and $i_Y(y)=(x^*,y)$ induce an isomorphism
$(i_{X_*},i_{Y_*}):\pi_k(X,x^*)\bigoplus \pi_k(Y,y^*)\rightarrow
\pi_k(X\vee Y,*)$ for $2\leq k\leq n+m$, provided $X$ or $Y$ is
locally finite.
\end{proposition}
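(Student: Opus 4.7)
The plan is to compare the wedge $X\vee Y$ to the Cartesian product $X\times Y$ via the natural inclusion $j:X\vee Y\hookrightarrow X\times Y$, and to show that $j$ is highly connected by a cellular dimension count. Injectivity will come for free from a splitting, while surjectivity in the stated range will reduce to a CW-skeletal comparison.

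First I would dispose of injectivity, which requires no dimension hypothesis. The collapse maps $r_X:X\vee Y\to X$ and $r_Y:X\vee Y\to Y$ (fixing one wedge summand and sending the other to the basepoint) are continuous retractions satisfying $r_X\circ i_X=\mathrm{id}_X$ and $r_Y\circ i_Y=\mathrm{id}_Y$. Hence $(r_{X*},r_{Y*})$ is a left inverse to $(i_{X*},i_{Y*})$, so the latter is split injective for every $k$, with no connectivity assumptions needed.

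For surjectivity, I would route everything through the product. Under the classical isomorphism $\pi_k(X\times Y)\cong\pi_k(X)\oplus\pi_k(Y)$ induced by the two coordinate projections, the inclusion $j$ corresponds on homotopy groups exactly to $(i_{X*},i_{Y*})$, because restricting the coordinate projections to $X\vee Y$ recovers $r_X$ and $r_Y$. It thus suffices to prove that $j_*:\pi_k(X\vee Y)\to\pi_k(X\times Y)$ is surjective for $2\le k\le n+m$. Using cellular approximation, replace $X$ (resp.\ $Y$) by a homotopy-equivalent CW-complex with a single $0$-cell and no cells in dimensions $1,\dots,n$ (resp.\ $1,\dots,m$). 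Provided $X$ or $Y$ is locally finite, $X\times Y$ carries a CW structure whose cells are the products $e^p\times e^q$; the wedge $X\vee Y$ is the subcomplex of those cells with $p=0$ or $q=0$. Any product cell not in the wedge satisfies $p\ge n+1$ and $q\ge m+1$, so has dimension $\ge n+m+2$. Therefore $X\vee Y$ contains the $(n+m+1)$-skeleton of $X\times Y$, and standard CW theory implies that $j$ is $(n+m+1)$-connected, giving the required isomorphism in degrees $2\le k\le n+m$.

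The main obstacle is the CW-product step: the Cartesian product of two CW-complexes need not be a CW-complex in the product topology, and the cellular decomposition above can fail without a finiteness hypothesis. This is precisely where the assumption that one of $X$, $Y$ is locally finite enters; it guarantees that the product CW structure is available so that the dimension count is meaningful. Once this is in place the rest of the argument is purely formal, combining the splitting from the first paragraph with the $(n+m+1)$-connectivity of $j$ to conclude that $(i_{X*},i_{Y*})$ is an isomorphism in the claimed range.
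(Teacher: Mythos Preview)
The paper does not supply its own proof of this proposition; it simply quotes the statement from Switzer \cite[Prop.~6.36]{S}. Your argument is precisely the standard proof found there: compare $X\vee Y$ with $X\times Y$ via the inclusion $j$, pass to CW models with no cells in dimensions $1,\dots,n$ (resp.\ $1,\dots,m$), observe that the wedge then contains the $(n+m+1)$-skeleton of the product, and invoke local finiteness to ensure the product carries the CW topology. So there is nothing to compare against, and your approach is the canonical one.

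One small correction is in order. You write that it suffices to prove $j_*$ is \emph{surjective}, but in fact $j_*$ is automatically surjective in every degree: any class in $\pi_k(X\times Y)\cong\pi_k(X)\oplus\pi_k(Y)$ is hit by $j_*\bigl(i_{X*}(\alpha)+i_{Y*}(\beta)\bigr)$, since $j\circ i_X$ and $j\circ i_Y$ are the coordinate inclusions into the product. What is actually needed to conclude that $(i_{X*},i_{Y*})$ is onto is \emph{injectivity} of $j_*$ in the range $k\le n+m$; this is what makes the retraction $(r_{X*},r_{Y*})$ a two-sided inverse. Your $(n{+}m{+}1)$-connectivity argument does deliver the full isomorphism, so the conclusion is unaffected; only the intermediate reduction is misstated. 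You might also add one sentence explaining why the replacement of $X$ and $Y$ by minimal CW models can be carried out so that at least one factor remains locally finite, since otherwise the product-CW step you flag as the main obstacle is not obviously available after the replacement.
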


So, one can determine $\pi_n(Y_i^{(n)},y_i^*)$ as follow (see [12]):

\begin{corollary}
With the previous notation, let $Y_i^{(n)}$ be the wedge
$\bigvee_{j\leq i}X_j^{(n)}$. Then
$$\pi_n(Y_i^{(n)},y_i^*)\cong\bigoplus_{j=1}^i\pi_n(X_j^{(n)},x_j^*)\ \ \ \ (for\ all\ n\geq 2).$$
\end{corollary}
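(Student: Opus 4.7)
The plan is to proceed by induction on $i \geq 1$. The base case $i=1$ is trivial because $Y_1^{(n)} = X_1^{(n)}$, so the statement reduces to $\pi_n(X_1^{(n)}, x_1^*) \cong \pi_n(X_1^{(n)}, x_1^*)$. For the inductive step, I would write $Y_i^{(n)} = Y_{i-1}^{(n)} \vee X_i^{(n)}$ and try to apply Proposition 2.3 directly to this wedge decomposition.

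To invoke Proposition 2.3, I need to verify its hypotheses for the pair $(Y_{i-1}^{(n)}, X_i^{(n)})$. Since each $X_j^{(n)}$ is an $(n-1)$-connected CW-complex, and a finite wedge of $(n-1)$-connected CW-complexes is again $(n-1)$-connected (the bouquet is obtained by attaching cells of dimension $\geq n$ to a point), $Y_{i-1}^{(n)}$ is $(n-1)$-connected. Moreover, a finite wedge of compact CW-complexes is compact CW, hence locally finite. Thus, with both summands $(n-1)$-connected, Proposition 2.3 yields an isomorphism $\pi_k(Y_{i-1}^{(n)}, y_{i-1}^*) \oplus \pi_k(X_i^{(n)}, x_i^*) \cong \pi_k(Y_i^{(n)}, y_i^*)$ for all $2 \leq k \leq 2(n-1) = 2n-2$.

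The range of $k$ is what forces the assumption $n \geq 2$: taking $k = n$ requires $n \leq 2n-2$, i.e., $n \geq 2$. Specializing to $k = n$ and combining with the inductive hypothesis $\pi_n(Y_{i-1}^{(n)}, y_{i-1}^*) \cong \bigoplus_{j=1}^{i-1}\pi_n(X_j^{(n)}, x_j^*)$ gives the required conclusion for $i$.

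There is no genuine obstacle here; the proof is a direct finite induction built on Proposition 2.3. The only point deserving attention is checking that the inductive hypotheses of Proposition 2.3 propagate through the induction, namely that $Y_{i-1}^{(n)}$ remains $(n-1)$-connected, locally finite, and CW at every stage, which all follow from the closure of these properties under finite wedges of based compact CW-complexes.
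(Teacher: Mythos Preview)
Your proof is correct and follows exactly the route the paper intends: the paper gives no explicit argument for this corollary beyond the citation ``see [12]'' and the preceding Proposition~2.3, and your induction on $i$ using Proposition~2.3 at each step is precisely how that citation unpacks. The verification that $Y_{i-1}^{(n)}$ stays $(n-1)$-connected and locally finite (being a finite wedge of compact, $(n-1)$-connected CW complexes) and the observation that $n\geq 2$ is needed so that $k=n$ falls in the range $2\leq k\leq 2n-2$ are the right checks.
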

\begin{remark}
By Corollaries $2.2$ and $2.4$, $\pi_n(Y_i^{(n)},y_i^*)$ is finitely
generated. Since $r_k^i(Y_i^{(n)})$ $=Y_k^{(n)}$, then clearly
$\alpha_{1,1},\cdots
,\alpha_{1,k_1},\alpha_{2,1},\cdots,\alpha_{2,k_2},\cdots,\alpha_{i,1},\cdots,\alpha_{i,k_i}$
are the generators of $\pi_n(Y_i^{(n)},y_i^*)$. This means that if
$\gamma\in\pi_n(Y_i^{(n)},y_i^*)$, then
$\gamma=(\alpha_{1,1}^{l_{1,1}}\cdots\alpha_{1,k_1}^{l_{1,k_1}},$
$\alpha_{2,1}^{l_{2,1}}\cdots
\alpha_{2,k_2}^{l_{2,k_2}},\cdots,\alpha_{i,1}^{l_{i,1}}\cdots\alpha_{i,k_i}^{l_{i,k_i}})$
for some integers
$l_{1,1},\cdots,l_{1,k_1},\cdots,$ $l_{i,1},\cdots,l_{i,k_i}$, where
$\alpha^l=\alpha\cdots\alpha$ is the concatenation $l$-times of
homotopy class of $\alpha$ with itself. We embed the generators
$\alpha_{j1},\cdots,\alpha_{jk_j}$, $(j\leq i)$, in
$\pi_n(Y_i^{(n)},y_i^*)$ by a map induced by inclusion
$X_j^{(n)}\rightarrow Y_i^{(n)}=\bigvee_{k=1}^iX_k^{(n)}$. For
simplicity, one can denote the embedded classes by the same
notations $\alpha_{j,1}\cdots \alpha_{j,k_j}$. It is easy to see
that the map
$$\alpha_{1,1}^{l_{1,1}}\cdots\alpha_{1,k_1}^{l_{1,k_1}}\cdots
 \alpha_{i,1}^{l_{i,1}}\cdots\alpha_{i,k_i}^{l_{i,k_i}} \mapsto
(\alpha_{1,1}^{l_{1,1}}\cdots\alpha_{1,k_1}^{l_{1,k_1}},\cdots,\alpha_{i,1}^{l_{i,1}}\cdots\alpha_{i,k_i}^{l_{i,k_i}})$$
induces an isomorphism between $\pi_n(Y_i^{(n)},y_i^*)$ and
$\bigoplus_{j=1}^i\pi_n(X_j^{(n)},x_j^*)$.
\end{remark}

Let $f:I^n\rightarrow \mathcal{H}_n$ be an $n$-loop
 with $f(\partial I^n)=\{*\}$, $f$ is said to be standard if $f(J_i^n)\subseteq
 X_i^{(n)}$, where $J_i^{n}=[a_i,b_i]\times I^{(n-1)}$ with
 $a_i=1-\frac{1}{2^{i-1}}$ and $b_i=1-\frac{1}{2^{i}}$, for $i\in
 \mathbb{N}$.

 Now, we recall a definition of [11]. Suppose that $(X,x)$ is a
 pointed space. Given an $n$-loop $f$ based at $x$ in $X$, then any
 other $n$-loop $g$ based at $x$ in $X$, with $H:g\simeq f (rel \partial
 I^n)$ and $g(I^n\setminus I_1^n)=\{x\}$ is called a concentration of $f$ on
 subcube $I_1^n$.

 We will need the following lemma which is a key step in the proof of
 Theorem 1.1.

\begin{lemma}
Each $n$-homotopy class in $\pi_n(\mathcal{H}_n,*)$ is represented by a
standard $n$-loop.
\end{lemma}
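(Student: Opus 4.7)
The plan is to extract the contribution of $f$ to each summand $X_j^{(n)}$, concentrate it on the corresponding subcube $J_j^n$, and glue the pieces into a standard representative. For each $j$, let $q_j:\mathcal{H}_n\to X_j^{(n)}$ be the retraction collapsing $\bigcup_{k\neq j}X_k^{(n)}$ to $x_j^*$; this is continuous as the composition of $R_j$ with the wedge projection $Y_j^{(n)}\to X_j^{(n)}$. Put $f_j=q_j\circ f$, an $n$-loop in $X_j^{(n)}$ based at $x_j^*$. Since each $X_j^{(n)}$ is $(n-1)$-connected, locally $(n-1)$-connected, $n$-semilocally simply connected, and a compact CW space, the concentration construction recalled from [11] applies inside $X_j^{(n)}$. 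After rescaling a concentration onto the prescribed subcube, it produces a representative $g_j:I^n\to X_j^{(n)}$ of $[f_j]\in\pi_n(X_j^{(n)},x_j^*)$ with $g_j(I^n\setminus J_j^n)=\{x_j^*\}$, together with a concentration homotopy $H_j:f_j\simeq g_j$ rel $\partial I^n$ whose image lies in $X_j^{(n)}$.

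Assembling these, define a candidate standard loop $\tilde f:I^n\to\mathcal{H}_n$ by $\tilde f|_{J_j^n}=g_j|_{J_j^n}$ (regarded in $\mathcal{H}_n$ via the inclusion $X_j^{(n)}\hookrightarrow\mathcal{H}_n$) and $\tilde f(x)=*$ for $x\in\{1\}\times I^{n-1}$. Continuity of $\tilde f$ along the accumulation face follows from the Hawaiian-like topology of $\mathcal{H}_n$: every neighborhood of $*$ contains all but finitely many of the summands $X_j^{(n)}$, so points approaching $\{1\}\times I^{n-1}$ lie in some $J_j^n$ with $j$ arbitrarily large and are automatically mapped into small neighborhoods of $*$. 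By construction $\tilde f$ is a standard $n$-loop based at $*$.

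The heart of the argument is constructing a homotopy $f\simeq\tilde f$ rel $\partial I^n$. My plan is a telescoping construction: define intermediate maps $F_0=f,F_1,F_2,\ldots$ with $F_m$ standardized on $J_1^n\cup\cdots\cup J_m^n$ while still carrying the data from the spheres $X_k^{(n)}$ with $k>m$ on the strip $[a_{m+1},1]\times I^{n-1}$, and pass from $F_{m-1}$ to $F_m$ by a homotopy that modifies the loop only inside $X_m^{(n)}$ (splicing in $H_m$) followed by a rearrangement of support onto $J_m^n$. Concatenate these by placing the $m$-th homotopy on the time slice $[1-2^{1-m},1-2^{-m}]$ and set the value at time $1$ equal to $\tilde f$. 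The main obstacle is continuity of this assembled homotopy at time $1$: one has to see that the $m$-th step alters the loop only within $X_m^{(n)}$, so that by the Hawaiian-like topology the $m$-th modification becomes arbitrarily small in $\mathcal{H}_n$ as $m\to\infty$, yielding uniform convergence $F_m\to\tilde f$. The finite generation of each $\pi_n(X_j^{(n)},x_j^*)$ granted by Corollary 2.2, combined with local $(n-1)$-connectedness, is precisely what makes each individual concentration controllable enough for this convergence estimate to close.
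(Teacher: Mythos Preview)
Your telescoping scheme has a genuine gap at the step ``pass from $F_{m-1}$ to $F_m$ by a homotopy that modifies the loop only inside $X_m^{(n)}$ (splicing in $H_m$).'' The space $\mathcal{H}_n$ is a wedge, not a product, so a map $F_{m-1}:I^n\to\mathcal{H}_n$ is not determined by its projections $q_k\circ F_{m-1}$ and cannot be altered coordinatewise. Concretely, $H_m$ is a homotopy of $q_m\circ f$ defined on all of $I^n$; at a point $x$ where $F_{m-1}(x)$ lies in some $X_k^{(n)}$ with $k\neq m$, there is no way to graft in the value $H_m(x,t)\in X_m^{(n)}$ without destroying continuity at the boundary of $F_{m-1}^{-1}(X_m^{(n)})$. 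Your subsequent ``rearrangement of support onto $J_m^n$'' then needs a homotopy, inside the tail $\bigvee_{k\ge m}X_k^{(n)}$, from (a reparametrized) $R_{>m-1}\circ f$ to $g_m * (R_{>m}\circ f)$; but the existence of that homotopy is essentially the statement you are trying to prove, applied to the tail, and invoking it is circular. The closing appeal to finite generation (Corollary~2.2) does not address this: what is missing is not a size estimate but the construction of the step homotopies themselves.

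The paper avoids this by never working directly in $\mathcal{H}_n$. It passes to the finite wedges $Y_i^{(n)}$ via $f_i=R_i\circ f$, uses the algebraic splitting of Corollary~2.4 (and Remark~2.5) to obtain an honest homotopy $H_i:f_i\simeq h_1*\cdots*h_i$ in $Y_i^{(n)}$, and then arranges these inductively so that $r_k^i\circ H_i=H_k$ for $k\le i$. The compatible system $(H_i)$ then has an inverse limit $H:I^n\times I\to\mathcal{H}_n$, continuous by the uniform-metric argument cited from~[9]. The point is that Corollary~2.4 supplies the needed homotopies only in the \emph{finite} wedges; the passage to $\mathcal{H}_n$ is handled by the inverse-limit machinery rather than by a direct telescoping inside the infinite wedge.
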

\begin{proof}
Let $f$ be any $n$-loop in $\mathcal{H}_n$ based at $*$. Then $f$
determines a sequence of $n$-loops $f_i$ in $Y_i^{(n)}$ defined by
$f_i=R_i\circ f$, where $R_i:\mathcal{H}_n\rightarrow Y_i^{(n)}$
denotes the retraction fixing $Y_i^{(n)}$ pointwise and collapsing
$\bigcup_{j=n+1}^{\infty}Y_j^{(n)}$ to the point $*$. By Corollary
2.4, the $n$-homotopy class of $f_i$ is contained in
$\bigoplus_{j=1}^i\pi_n(X_j^{(n)},x_j^*)$. Therefore,
$[f_i]=(\alpha_{1,1}^{l_{1,1}}\cdots\alpha_{1,k_1}^{l_{1,k_1}},\cdots,
\alpha_{i,1}^{l_{i,1}}\cdots\alpha_{i,k_i}^{l_{i,k_i}})$ for some
integers
$l_{1,1},\cdots,l_{1,k_1},\cdots,l_{i,1},\cdots,l_{i,k_i}$. Let
$g_1,\cdots,g_i$ be $n$-loops in $X_1^{(n)},\cdots,X_i^{(n)}$
representing the $n$-homotopy classes $\gamma_1,\cdots,\gamma_i$, where
$\gamma_j=\alpha_{j,1}^{l_{j,1}}\cdots\alpha_{j,k_j}^{l_{j,k_j}}$.
By Rremark $2.5$, $f_i\simeq g_1*\cdots *g_i$, where $*$ denotes the product
of $n$-loops in $\Omega^n(Y_i^{(n)},y_i^*)$. Note that each
$n$-loop in $X_j^{(n)}$ ($j<i$) can be embedded in $Y_i^{(n)}$ or in
 $\mathcal{H}_n$, if it is necessary, by
 maps induced by the inclusions $X_j^{(n)}\rightarrow Y_i^{(n)}$
 and $Y_i^{(n)}\rightarrow \mathcal{H}_n$, respectively.

Let $h_j$ be a concentration of $g_j$ on subcube $J_j^n$, for $j\in
\mathbb{N}$. By Lemma $2.5.2$ of [11], such concentration exists.
Then $f_i\simeq h_1*\cdots *h_i$ $(rel \partial I^n)$ by a homotopy
$H_i$.

We proceed by induction, constructing homotopies $H_i:I^n\times
[0,1]\rightarrow Y_i^{(n)}$ satisfying

\begin{enumerate}
    \item $H_i(x,0)=f_i(x)$;
    \item $H_i(x,1)=h_1*\cdots *h_i(x)$;
    \item $r_k^i\circ H_i=H_k$, $(k \leq i)$.
\end{enumerate}

Such homotopies give, in the limit, a homotopy $H$ of $n$-loop $f$
to a standard $n$-loop $h$ (the homotopies $H_i$'s are endowed with
the uniform metric, so the limit $H$ exists and it is continuous,
see [9, Theorem 46.8 and Corollary 46.6]).
\end{proof}

Now to prove the main result of this section, Theorem 1.1, suppose
$f$ is a standard $n$-loop based at $*$ in $\mathcal{H}_n$, $n\geq
2$, with corresponding sequence of $n$-loops $f_i=R_i\circ f$ in
$Y_i^{(n)}$. The homomorphism $\beta_n$ is well-defined, since
$r_k^i(f_i)=f_k$ and by construction of standard map in the above
lemma and Remark $2.5$, we have $\pi_k^i[f_i]=[f_k]$.

To show the injectivity of $\beta_n$, we must prove that given a
standard $n$-loop $f$ in $\mathcal{H}_n$ with
$\beta_n([f])=e_{\mathcal{G}}$, there is a based homotopy between
$f$ and the constant $n$-loop at $*$, where $e_{\mathcal{G}}$ is the
identity of $\displaystyle{\mathcal{G}=\varprojlim
\Pi_n(Y_i^{(n)},y_i^*) }$. Let $f_i=R_i\circ f$. Clearly
$[f_i]=(e_1,\cdots,e_i)$, where $e_j$ is the identity of
$\pi_n(X_j^{(n)},x_j^*)$, for $j=1,\cdots,i$. Then there are based
homotopies $K_i$ between $f_i$ and the constant $n$-loop at $y_i^*$.
Now the limit of $K_i$'s is a homotopy between $f$ and constant
$n$-loop at $*$, denoted by $K$.

Now, it is sufficient to show that $\beta_n$ is surjective. Let
$\mathfrak{g}=(g_i)\in \mathcal{G}$. Then
$g_i=(\eta_1,\cdots,\eta_i)\in
\bigoplus_{j=1}^i\pi_n(X_j^{(n)},x_j^*)\cong\pi_n(Y_i^{(n)},y_i^*)$.
Suppose $f_i$ represents $n$-homotopy class $g_i$. So
$r_j^i(f_i)=f_j$, since
$\pi_j^i(\eta_1,\cdots,\eta_i)=(\eta_1,\cdots,\eta_j)$ for $j\leq
i$. Also there are integers
$l_{1,1},\cdots,l_{1,k_1},\cdots,l_{i,1},\cdots,l_{i,k_i}$ such that
$[f_i]=\alpha_{1,1}^{l_{1,1}}\cdots\alpha_{1,k_1}^{l_{1,k_1}}\cdots
$ $\alpha_{i,1}^{l_{i,1}}\cdots\alpha_{i,k_i}^{l_{i,k_i}}$. Let the
$n$-loop $t_j$ represent
$\alpha_{j,1}^{l_{j,1}}\cdots\alpha_{j,k_j}^{l_{j,k_j}}$, let $s_j$
be a concentration of $t_j$ on $J_j^n$, and let $h_i=s_1\cdots s_i$.
Clearly the limit of $h_i$'s is an $n$-loop denoted by $h$ and
$[h]=g$ (the homotopies $K_i$'s and the $n$-loops $h_i$'s are
endowed with the uniform metric so their limits $K$ and $h$ exist
and they are also continuous, see [M, Theorem 46.8, Corollary
46.6]). This completes the proof.

\begin{corollary}
Let $\mathcal{H}_n$ be the $n$-dimensional Hawaiian earring with
based point *. Then $\displaystyle{\pi_n(\mathcal{H}_n,*)\cong
\mathcal{G}=\varprojlim G_i}$, where $G_i$ is the direct sum of
$i$ copies of integers $\mathbb{Z}$.
\end{corollary}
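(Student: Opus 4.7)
The plan is to deduce the corollary as a direct specialization of Theorem 1.1, using the fact that the $n$-dimensional Hawaiian earring fits the framework of $n$-Hawaiian like spaces when one takes each factor $X_j^{(n)}$ to be an $n$-sphere $S_j^{n}$ of radius $1/j$. The main work is bookkeeping: verify the hypotheses of Theorem 1.1, identify the factor groups, and match the inverse systems.

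First I would check that $\mathcal{H}_n$ is $n$-Hawaiian like. The $n$-sphere $S^n$ is a compact CW complex, is $(n-1)$-connected, locally $(n-1)$-connected (in fact locally contractible), and is $n$-semilocally simply connected since $\pi_{n+1}$ plays no role in the definition and every small neighborhood has trivial image in $\pi_n$ provided one chooses neighborhoods inside a contractible chart. Taking $X_j^{(n)} = S_j^{n}$ therefore satisfies the standing hypotheses on the factors. The finite wedges $Y_i^{(n)} = \bigvee_{j \leq i} S_j^{n}$ are then the standard approximating bouquets, and by construction of $\mathcal{H}_n$ as the union of shrinking spheres in $\mathbb{R}^{n+1}$ one has $\mathcal{H}_n = \varprojlim Y_i^{(n)}$ with the collapse maps $r_k^i$ serving as bonding maps.

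Next I would apply Theorem 1.1 to conclude that the homomorphism
$$\beta_n : \pi_n(\mathcal{H}_n, *) \longrightarrow \varprojlim \pi_n(Y_i^{(n)}, y_i^*)$$
is a bijection, hence an isomorphism of groups. By Corollary 2.4 one has $\pi_n(Y_i^{(n)}, y_i^*) \cong \bigoplus_{j=1}^{i} \pi_n(S_j^{n}, x_j^*)$, and since $\pi_n(S^n) \cong \mathbb{Z}$ this simplifies to $G_i = \bigoplus_{j=1}^{i} \mathbb{Z}$. One then checks that the bonding maps $\pi_k^i$ of the system $\{G_i\}$ correspond (under these isomorphisms) to the homomorphisms $(r_k^i)_*$ induced by the collapse maps, which is immediate from the explicit formula for $\pi_k^i$ given in Section 2 together with the naturality built into Corollary 2.4.

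Combining these isomorphisms gives $\pi_n(\mathcal{H}_n, *) \cong \varprojlim G_i = \mathcal{G}$, as required. I do not expect any genuine obstacle here: all the analytic content is already absorbed into Theorem 1.1 and Corollary 2.4, and the only thing to verify is that the spheres $S^n$ are admissible factors and that the standard inverse system for $\mathcal{H}_n$ matches the abstract one of Theorem 1.1. The statement then falls out by composing the two isomorphisms.
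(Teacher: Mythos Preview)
Your proposal is correct and matches the paper's intent: the corollary is stated without proof immediately after Theorem~1.1, as a direct specialization obtained by taking $X_j^{(n)}=S_j^{n}$ and using $\pi_n(S^n)\cong\mathbb{Z}$ together with Corollary~2.4. Your verification of the hypotheses and the bookkeeping with the bonding maps is exactly what the paper leaves implicit.
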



\section{Proof of Theorem 1.2}

In conclusion, we assert that $\pi_n^{top}(\mathcal{H}_n,*)$ is a topological
group homeomorphic to $\pi_n^{lim}(\mathcal{H}_n,*)$ which implies that $\pi_n^{top}(\mathcal{H}_n,*)$
is metrizable.

As mentioned in introduction, it is an open question whether or not in general
$\pi_n^{top}(X,*)$ is a topological group. If $X$ is a locally
$n$-connected metric space, then $\pi_n^{top}(X,x^*)$ and hence
$\pi_n^{top}(X,x^*) \times \pi_n^{top}(X,x^*)$ is discrete (see [7])
and therefore multiplication is continuous. In general, the
continuity of multiplication remains an unsettled question.

The following lemma shows that if $(X,x^*)$ is a pointed topological space, then left and right translations
by a fixed element in $\pi_n^{top}(X,x^*)$ are homeomorphisms.

\begin{lemma}
Let $(X,x^*)$ be a pointed topological space. If $[f] \in \pi_n^{top}(X,x^*)$, then left
and right translations by $[f]$  are homeomorphisms of $\pi_n^{top}(X,x^*)$.
\end{lemma}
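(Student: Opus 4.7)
The plan is to lift the translation maps to the $n$-loop space $\Omega^n(X,x^*)$, use that concatenation with a fixed $n$-loop is continuous in the compact-open topology, and then descend via the quotient map. Let $q:\Omega^n(X,x^*)\to\pi_n^{top}(X,x^*)$ denote the identification map defining the topology on $\pi_n^{top}(X,x^*)$. Define $L_{[f]}:\pi_n^{top}(X,x^*)\to\pi_n^{top}(X,x^*)$ by $L_{[f]}([g])=[f*g]$ where $*$ denotes concatenation of $n$-loops in the first coordinate, and define $R_{[f]}$ analogously. Since $L_{[f^{-1}]}$ is a two-sided inverse of $L_{[f]}$ (and similarly on the right), it suffices to prove that each such translation map is continuous.

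The first step is to introduce the auxiliary map $\lambda_f:\Omega^n(X,x^*)\to\Omega^n(X,x^*)$ defined by $\lambda_f(g)=f*g$, and to check that $\lambda_f$ is continuous in the compact-open topology. This is the crux of the argument: it does not require joint continuity of concatenation (which is exactly the open question recalled just before the lemma), but only continuity of concatenation with one argument held fixed. One verifies this directly using a subbasic open set $\langle K,U\rangle$ in $\Omega^n(X,x^*)$: the preimage under $\lambda_f$ is either all of $\Omega^n(X,x^*)$, empty, or a subbasic set of the same form determined by the restriction of $K$ to the half where the variable loop lives (after an evident continuous reparametrization). The analogous map $\rho_f(g)=g*f$ is continuous for the same reason.

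The second step is to observe that the diagram
\begin{equation*}
q\circ\lambda_f=L_{[f]}\circ q,
\end{equation*}
commutes, because concatenating with $f$ respects the based homotopy equivalence relation. Since $q$ is a quotient map and $q\circ\lambda_f$ is continuous by the first step, the universal property of the quotient topology forces $L_{[f]}$ to be continuous. Applying this to $[f^{-1}]$ in place of $[f]$ yields continuity of $L_{[f^{-1}]}=L_{[f]}^{-1}$, so $L_{[f]}$ is a homeomorphism. The argument for $R_{[f]}$ is entirely symmetric via $\rho_f$.

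The main obstacle I anticipate is carefully verifying continuity of $\lambda_f$, since one must avoid any appeal to joint continuity of concatenation. The verification is routine once one chooses a subbasis of the compact-open topology and tracks the reparametrization $(t_1,\ldots,t_n)\mapsto(2t_1-1,t_2,\ldots,t_n)$ on the relevant half-cube, but it is the only nontrivial ingredient; everything else is formal quotient-topology bookkeeping. No claim is made here about continuity of the binary operation, so the result is consistent with the multiplication question being open in general.
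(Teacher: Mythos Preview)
Your proof is correct and follows the same strategy as the paper: lift to $\Omega^n(X,x^*)$, verify continuity of concatenation by a fixed loop on subbasic sets, descend through the quotient map via its universal property, and invert using $[f^{-1}]$. The only difference is that the paper in fact proves \emph{joint} continuity of concatenation $\tilde m:\Omega^n(X,x^*)\times\Omega^n(X,x^*)\to\Omega^n(X,x^*)$ at the loop-space level (computing $\tilde m^{-1}(\langle K,U\rangle)=\langle K_1,U\rangle\times\langle K_2,U\rangle$) and then restricts to one slot; your caution about avoiding joint continuity is misplaced, since the open question is whether it survives passage to the quotient $\pi_n^{top}$, not whether it holds on $\Omega^n$.
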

\begin{proof}
First, we show that the multiplication
$$\Omega^n(X,x^*)\times \Omega^n(X,x^*) \stackrel{\tilde{m}}{\longrightarrow} \Omega^n(X,x^*)$$
is continuous, where $\tilde{m}$ is concatenation of $n$-loops and $\Omega^n(X,x^*)$ is equipped
with compact-open topology.

Let $\langle K,U\rangle$ be a subbasis
element in $\Omega^n(X,x^*)$. Define
$$K_1=\{(t_1,\ldots,t_n);(t_1,\ldots,t_{n-1},\frac{t_n}{2})\in K\}$$
and $$K_2=\{(t_1,\ldots,t_n);(t_1,\ldots,t_{n-1},\frac{t_n+1}{2})\in
K\}.$$ Then
\[\tilde{m}^{-1}(\langle K,U\rangle)=\{(f_1,f_2);(f_1*f_2)(K)\subseteq U\}=\langle K_1,U\rangle\times\langle
K_2,U\rangle\]
is open in $\Omega^n(X,x^*)\times
\Omega^n(X,x^*)$ and so $\tilde{m}$ is continuous.

Now, fix $[f] \in \pi_n^{top}(X,x^*)$ and consider left translation by $[f]$ on $\pi_n^{top}(X,x^*)$
$$\pi_n^{top}(X,x^*)\rightarrow\pi_n^{top}(X,x^*).$$
$$\ \ \ [g]\mapsto[f]\cdot[g]$$
Clearly, the following diagram is commutative

\[
    \xymatrix{
        \Omega^n(X,x^\ast)\ar[r]^{\tilde{m}_f}\ar[d] & \Omega^n(X,x^\ast)\ar[d]\\
        \pi_n^{top}(X,x^\ast)\ar[r]^{m_{[f]}} &
        \pi_n^{top}(X,x^\ast),
    }
\]
where $\tilde{m}_f$ is defined by $g\mapsto f \ast g$. By the
universal property of quotient maps [9,Theorem 11.1], $m_{[f]}$ is
continuous. Since $m_{[f]}^{-1} = m_{[f^{-1}]}$ is also continuous,
so $m_{[f]}$ is a homeomorphism, as desired. A similar argument
implies that right translation is also a homeomorphism.
\end{proof}

Note that $\pi_n^{top}(X,x^*)$ acts on itself by left and right translations as a group of homeomorphisms.
It is easy to see that these actions are both transitive. So, we have the following result.

\begin{proposition}
If $(X,x^*)$ is a pointed topological space, then $\pi_n^{top}(X,x^*)$ is a homogeneous space.
\end{proposition}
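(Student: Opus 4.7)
The plan is to derive Proposition 3.2 as an immediate consequence of Lemma 3.1 together with the observation already flagged in the paragraph preceding the proposition, namely that the action of $\pi_n^{top}(X,x^*)$ on itself by left (or right) translation is transitive. Recall that a topological space $Y$ is \emph{homogeneous} when for every pair $y_0, y_1 \in Y$ there exists a self-homeomorphism of $Y$ mapping $y_0$ to $y_1$. So the task reduces to producing, for any two classes $[f],[g] \in \pi_n^{top}(X,x^*)$, a homeomorphism of $\pi_n^{top}(X,x^*)$ carrying $[f]$ to $[g]$.

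First, I would invoke Lemma 3.1 to note that for every class $[h] \in \pi_n^{top}(X,x^*)$ the left translation map
\[
m_{[h]} \colon \pi_n^{top}(X,x^*) \longrightarrow \pi_n^{top}(X,x^*), \qquad [k] \mapsto [h]\cdot[k],
\]
is a homeomorphism. In particular, set $[h] = [g]\cdot[f]^{-1}$; then $m_{[h]}([f]) = [g]\cdot[f]^{-1}\cdot[f] = [g]$, exhibiting the required homeomorphism. Thus, for every pair of points there is a homeomorphism of $\pi_n^{top}(X,x^*)$ sending one to the other, which establishes homogeneity.

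It is worth recording, essentially for free, that the collection $\{m_{[h]} : [h] \in \pi_n^{top}(X,x^*)\}$ forms a group of self-homeomorphisms acting transitively, which is exactly the statement that $\pi_n^{top}(X,x^*)$ acts on itself by left translations as a transitive group of homeomorphisms; the analogous statement holds for right translations by the second half of Lemma 3.1. I do not foresee any real obstacle: the entire content of the proposition is packaged in Lemma 3.1 and the transitivity of the translation action, so the proof should be only a few lines.
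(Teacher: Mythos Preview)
Your proposal is correct and matches the paper's approach exactly: the paper simply observes, immediately before stating the proposition, that by Lemma~3.1 the left and right translations are homeomorphisms and that these actions are transitive, which is precisely what you do. Your explicit choice of $[h]=[g]\cdot[f]^{-1}$ just spells out the transitivity step the paper leaves implicit.
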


Now, let $U$ be an open neighborhood of $x^*$ in $X$ and $\tilde{U}$ be the set of all $n$-loops
based at $x^*$ lying inside $U$. Also let $\hat{U}$ be its quotient under homotopy, that is
$$\hat{U}=\{ [f]; f\in \Omega^n(X,x^*)\text{ and } im f\subseteq U\}.$$
Suppose $W\subset \pi_n^{top}(X,x^*)$ is an open neighborhood containing the identity element $[e_{x_*}]$.
Then
$$e_{x_*}\in q^{-1}(W)=\bigcup_{\alpha \in J}(\bigcap_{i=1}^{k_\alpha}<K_i^\alpha , V_i^\alpha>),$$
where $e_{x_*}$ is the constant n-loop based at $x^*$. Therefore, there exists an index $\alpha \in J$
such that $e_{x_*} \in \bigcap_{i=1}^{k_\alpha}<K_i^\alpha , V_i^\alpha>$, which implies that
$$x^* \in \bigcup_{i=1}^{k_\alpha}V_i^\alpha =V^\alpha \ ,\ \
\ \tilde{V}^{\alpha}\subset\bigcap_{i=1}^{k_\alpha}<K_i^\alpha , V_i^\alpha> \ \subset \ q^{-1}(W)$$
and then $[e_{x_*}]\in \hat{V}^{\alpha}\subset W$.

Since left translation is continuous in $\pi_n^{top}(X,x^*)$,
$$[f]W=\{[f]\cdot [g];\ [g]\in W\}$$ runs through a basis at $[f]$ for $\pi_n^{top}(X,x^*)$ as $W$
runs through a basis at $[e_{x_*}]$ in its topology.

Now, we use a classical theorem in the theory of topological groups [8] which
asserts that for given a group $G$ with a filter base $\{U\}$
satisfying the following conditions

\begin{itemize}

    \item  Each $U$ is symmetric, i.e. $U^{-1}=U$;

    \item For each $U$ in $\{U\}$, there exists a $V$ in $\{U\}$ such that $V^2\subset U$, where $V^2=\{xy; x,y\in V\}$;

    \item For each $U$ in $\{U\}$ and $a\in G$, there exists a $V$ in $\{U\}$ such that $V\subset a^{-1}Ua$ or $aVa^{-1}\subset
    U$,

\end{itemize}
then $\{U\}$ forms a fundamental system of neighborhoods of $e$. In
particular, $G$ with the topology induced by this fundamental system
becomes a topological group.

Since $\pi_n(X,x^*)$ is an abelian group, for $n\geq 2$,  it is easy to see that
the filter base $\{\hat{U}\}$
forms a fundamental system of neighborhoods of the identity element  $e$ and hence $\pi_n(X,x^*)$ with this topology
becomes a topological group, denoted by $\pi_n^{lim}(X,x^*)$. By the above statements, this
topology, denoted by $\tau^{lim}$, is coarser than quotient topology $\tau^{top}$ on
$\pi_n(X,x^*)$ inherited from $\Omega^n(X,x^*)$ with the compact-open topology.

If $X=\mathcal{H}_n$ and $W_i^{(n)}$ is an n-connected neighborhood of $x_i^*$ in $X_i^{(n)}$, then
the following sets provide a basis for $\mathcal{H}_n$ at $*$
$$\mathcal{U}_k=(\bigcup_{i\leq k}W_i^{(n)})\cup (\bigcup_{i>k}X_i^{(n)}).$$
Also, a basis of neighborhoods of the identity $e_{\mathcal{G}}$ in $\mathcal{G}$ is given by the subgroups
$$\mathcal{G}_k=\{\mathfrak{g}=(g_i)\in \mathcal{G};\ g_k=(e_1,\ldots,e_k)\},\ \ for\ k\in \mathbb{N}.$$

\begin{theorem}
The map $\beta_n :\pi_n^{lim}(\mathcal{H}_n,*)\rightarrow \mathcal{G}$ is a homeomorphism and
therefore an isomorphism of topological groups.
\end{theorem}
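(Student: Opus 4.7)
The plan is to leverage Theorem 1.1, which already gives that $\beta_n$ is an algebraic isomorphism, and to reduce the proof of the homeomorphism statement to a single computation at the identity. Since both $\pi_n^{lim}(\mathcal{H}_n,*)$ and $\mathcal{G}$ are topological groups with explicit neighborhood bases $\{\hat{\mathcal{U}}_k\}$ and $\{\mathcal{G}_k\}$ of their respective identities, and since a bijective group homomorphism between topological groups is a homeomorphism whenever it is continuous and open at the identity, everything comes down to the identity
\[
\beta_n(\hat{\mathcal{U}}_k) \;=\; \mathcal{G}_k \qquad \text{for every } k \in \mathbb{N}.
\]

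For the inclusion $\beta_n(\hat{\mathcal{U}}_k) \subseteq \mathcal{G}_k$, I would pick $[f] \in \hat{\mathcal{U}}_k$ represented by an $n$-loop whose image lies in $\mathcal{U}_k$. Then $R_k \circ f$ has image inside the sub-wedge $W_1^{(n)} \vee \cdots \vee W_k^{(n)} \subseteq Y_k^{(n)}$. Composing with the canonical collapse $Y_k^{(n)} \to X_j^{(n)}$ gives, for each $j \leq k$, an $n$-loop whose image lies inside $W_j^{(n)}$, and this loop is nullhomotopic by $n$-connectedness of $W_j^{(n)}$. Corollary 2.4 and Remark 2.5 identify the class $[R_k \circ f] \in \pi_n(Y_k^{(n)}, y_k^*)$ with the tuple of its projections onto the summands $\pi_n(X_j^{(n)}, x_j^*)$, all of which are trivial, so $[R_k \circ f] = e_k$ and hence $\beta_n([f]) \in \mathcal{G}_k$.

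For the reverse inclusion $\mathcal{G}_k \subseteq \beta_n(\hat{\mathcal{U}}_k)$, I would reuse the explicit surjectivity construction from the proof of Theorem 1.1. Given $\mathfrak{g} = (g_i) \in \mathcal{G}_k$, the condition $g_k = (e_1,\ldots,e_k)$ forces the exponents $l_{j,m}$ with $j \leq k$ to vanish, so the $n$-loops $t_j$ representing $\alpha_{j,1}^{l_{j,1}}\cdots\alpha_{j,k_j}^{l_{j,k_j}}$ may be chosen as the constant loop for $j \leq k$, and consequently so may their concentrations $s_j$. The resulting standard $n$-loop $h = \lim h_i$ then has image inside $\bigcup_{j > k} X_j^{(n)} \subseteq \mathcal{U}_k$, so $[h] \in \hat{\mathcal{U}}_k$ and $\beta_n([h]) = \mathfrak{g}$.

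The main obstacle I anticipate is the first inclusion: one cannot simply invoke $n$-connectedness of the wedge $\bigvee_{j \leq k} W_j^{(n)}$, since the $W_j^{(n)}$ need not be CW subcomplexes or even well-pointed, so the projection-to-summand argument above is what legitimately triggers the direct-sum decomposition of Corollary 2.4. Once both inclusions are in hand, the equality $\beta_n(\hat{\mathcal{U}}_k) = \mathcal{G}_k$ delivers continuity and openness of $\beta_n$ at the identity, which the homogeneity of the topological groups (Lemma 3.1 and Proposition 3.2) then propagates globally, yielding the claimed isomorphism of topological groups.
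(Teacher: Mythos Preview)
Your proposal is correct and follows essentially the same strategy as the paper: both reduce the homeomorphism claim to the single equality $\beta_n(\hat{\mathcal{U}}_k)=\mathcal{G}_k$ at the identity and then invoke homogeneity of the two topological groups. The handling of the two inclusions differs only cosmetically (you project to summands and reuse the explicit surjectivity construction, whereas the paper passes through a standard representative and abstract bijectivity); one minor wording point is that $g_k=(e_1,\dots,e_k)$ need not force the individual exponents $l_{j,m}$ to vanish---only the products $\alpha_{j,1}^{l_{j,1}}\cdots\alpha_{j,k_j}^{l_{j,k_j}}$ to be trivial---but that is all you need in order to choose $t_j$ constant.
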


\begin{proof}
Since $\{\hat{\mathcal{U}}_k\}$ and $\{\mathcal{G}_k\}$ form bases
at $*$ and $e_{\mathcal{G}}$ for topologies on
$\pi_n^{lim}(\mathcal{H}_n,*)$ and $\mathcal{G}$ (see also the
statement before Theorem 3.3), it is sufficient to show that
$\beta_n(\hat{\mathcal{U}}_k)=\mathcal{G}_k$. For, let $[f] \in
\hat{\mathcal{U}}_k$ and $h$ be a standard $n$-loop representing
$[f]$. Since $W_i^{(n)}$ is
 $n$-connected for $i\leq k$, we have $h_k=R_k \circ h$ is nullhomotopic. Therefore,
$\beta_n ([f])\in \mathcal{G}_k$.

Conversely, if $\mathfrak{g}=(g_i) \in \mathcal{G}_k$, then
$g_k=(e_1, \ldots , e_k)$. The homomorphism $\beta_n$ is a
bijection, so there is a standard $n$-loop $f$ such that $[f] \in
\pi_n(\mathcal{H}_n,*)$ and $\beta_n ([f])=g$. But $\beta_n
([f])=([R_1(f)],[R_2(f)], \ldots)$. This implies that
$[R_k(f)]=(e_1, \ldots , e_k)$ and therefore, $R_k(f)=f_k$ is
nullhomotopic.
 Take $h$ a standard
 $n$-loop such that $h\simeq f \ (rel\ \partial I^n)$ and $R_k\circ h=y_k^*$.
Then $\beta _n\ [h]=\beta _n\ [f]\ \in \hat{\mathcal{U}}_k$.
\end{proof}
The homomorphism $\beta_n$ gives a compatible sequence of
homomorphisms $\beta_{n,i} :\pi_n(\mathcal{H}_n,*) \rightarrow
\pi_n(Y_i^{(n)},y_i^*)$. By [7], the homomorphisms $\beta_{n,i}
:\pi_n^{top}(\mathcal{H}_n,*) \rightarrow
\pi_n^{top}(Y_i^{(n)},y_i^*)$ are continuous when we dealing with
topological homotopy groups, implies that $\beta_n
:\pi_n^{top}(\mathcal{H}_n,*)\rightarrow
\mathcal{G}=\underleftarrow{lim}\ \pi_n^{top}(Y_i^{(n)},y_i^*)$ is
also continuous.

So that there is no ambiguity in notation, we denote $\beta_n :\pi_n^{lim}(\mathcal{H}_n,*)\rightarrow \mathcal{G}$
 and $\beta_n :\pi_n^{top}(\mathcal{H}_n,*)\rightarrow \mathcal{G}$ by $\beta_n^{lim}$ and $\beta_n^{top}$, respectively.\\

 Now, consider the following commutative diagram.

\[
    \xymatrix{
        \pi_n^{top}(\mathcal H_n,*)\ar[d]_{id}\ar[r]^-{\beta_n^{top}} & \mathcal G\\
        \pi_n^{lim}(\mathcal H_n,*)\ar[ur]_-{\beta_n^{lim}}
    }
\]
 Since $\beta_n^{lim}$ is a homeomorphism, the identity map $i$ is continuous. This fact shows that quotient topology $\tau^{top}$ inherited of compact-open topology of $n$-loop space is coarser than $\tau^{lim}$. But we have already seen that $\tau^{top} \subset \tau^{lim}$. Therefore, these two topologies on $\pi_n(\mathcal{H}_n,*)$ are equivalent. This means that $\pi_n^{top}(\mathcal{H}_n,*)$
 is a topological group and also a prodiscrete metric space.\\

$\mathbf{Acknowledgement}$: Authors are grateful to the referee for
valuable suggestions and useful remarks.

\end{document}